\crefname{section}{Section}{Sections}
\crefname{subsection}{\S}{\S\S}
\crefname{subsubsection}{\S}{\S\S}
\theoremstyle{plain}
\newtheorem{lemma}{Lemma}[section]
\newtheorem{proposition}[lemma]{Proposition}
\newtheorem{corollary}[lemma]{Corollary}
\newtheorem{theorem}[lemma]{Theorem}
\theoremstyle{nonumberplain}
\theoremstyle{plain}
\newtheorem{definition}[lemma]{Definition}
\newtheorem{example}[lemma]{Example}
\newtheorem{remark}[lemma]{Remark}
\newtheorem{notation}[lemma]{Notation}
\crefname{definition}{definition}{definitions}
\crefname{ex}{example}{examples}
\crefname{remark}{remark}{remarks}
\crefname{convention}{convention}{conventions}
\crefname{notation}{notation}{notations}
\crefname{table}{table}{tables}
\crefname{lemma}{lemma}{lemmas}
\crefname{proposition}{proposition}{propositions}
\crefname{corollary}{corollary}{corollaries}
\crefname{theorem}{theorem}{theorems}
\crefname{enumi}{}{}
\crefname{assumption}{assumption}{Assumptions}
\crefname{equation}{}{}
\numberwithin{equation}{section}
\theoremstyle{nonumberplain}
\newtheorem{proof}{Proof}
\newcommand\pf[1]{\newtheorem{#1}{Proof of \Cref{#1}}}
\newcommand\bA{{\mathbb A}}
\newcommand\bB{{\mathbb B}}
\newcommand\bC{{\mathbb C}}
\newcommand\bG{{\mathbb G}}
\newcommand\bH{{\mathbb H}}
\newcommand\bK{{\mathbb K}}
\newcommand\bM{{\mathbb M}}
\newcommand\bN{{\mathbb N}}
\newcommand\bP{{\mathbb P}}
\newcommand\bR{{\mathbb R}}
\newcommand\bZ{{\mathbb Z}}
\newcommand\cK{{\mathcal K}}
\newcommand\cM{{\mathcal M}}
\newcommand\cO{{\mathcal O}}
\newcommand\cR{{\mathcal R}}
\DeclareMathOperator{\id}{id}
\newcommand{\cat}[1]{\textsc{#1}}
\newcommand{\qedhere}{\mbox{}\hfill\ensuremath{\blacksquare}}
\renewcommand{\square}{\mathrel{\Box}}
\title{Normal subgroups and relative centers of linearly reductive quantum groups}
\author{Alexandru Chirvasitu}
\begin{document}

\date{}

\newcommand{\Addresses}{{
  \bigskip
  \footnotesize

  \textsc{Department of Mathematics, University at Buffalo, Buffalo,
    NY 14260-2900, USA}\par\nopagebreak \textit{E-mail address}:
  \texttt{achirvas@buffalo.edu}

}}

\maketitle

\begin{abstract}
  We prove a number of structural and representation-theoretic results on linearly reductive quantum groups, i.e. objects dual to that of cosemisimple Hopf algebras: (a) a closed normal quantum subgroup is automatically linearly reductive if its squared antipode leaves invariant each simple subcoalgebra of the underlying Hopf algebra; (b) for a normal embedding $\mathbb{H}\trianglelefteq \mathbb{G}$ there is a Clifford-style correspondence between two equivalence relations on irreducible $\mathbb{G}$- and, respectively, $\mathbb{H}$-representations; and (c) given an embedding $\mathbb{H}\le \mathbb{G}$ of linearly reductive quantum groups the Pontryagin dual of the relative center $Z(\mathbb{G})\cap \mathbb{H}$ can be described by generators and relations, with one generator $g_V$ for each irreducible $\mathbb{G}$-representation $V$ and one relation $g_U=g_Vg_W$ whenever $U$ and $V\otimes W$ are not disjoint over $\mathbb{H}$.

  This latter center-reconstruction result generalizes and recovers M\"uger's compact-group analogue and the author's quantum-group version of that earlier result by setting $\mathbb{H}=\mathbb{G}$.
\end{abstract}

\noindent {\em Key words: quantum group; cosemisimple Hopf algebra; comodule; cotensor; center; linearly reductive; antipode}

\vspace{.5cm}

\noindent{MSC 2020: 16T05; 20G42; 16T20}


\section*{Introduction}

The quantum groups in the title are as in \cite[\S 1.2]{pw}: objects $\bG$ dual to corresponding Hopf algebras $\cO(\bG)$, with the latter regarded as the algebra of regular functions on (the otherwise non-existent) linear algebraic quantum group $\bG$. Borrowing standard linear-algebraic-group terminology (e.g. \cite[Chapter 1, \S 1, Definition 1.4]{fkm}), the linear reductivity condition then simply means that the Hopf algebra $\cO(\bG)$ is cosemisimple.

The unifying thread through the material below is the concept of a (closed) normal quantum subgroup. In the present non-commutative setting normality can be defined in a number of ways that are frequently equivalent \cite[Theorem 2.7]{wnorm}. We settle here on the concept introduced in \cite[\S 1.5]{pw} (and recalled in \Cref{def:norm}): a quotient Hopf algebra
\begin{equation*}
  \cO(\bG)\to \cO(\bH)
\end{equation*}
dual to a closed quantum subgroup $\bH\le \bG$ is normal if that quotient is an $\cO(\bG)$-comodule under both adjoint coactions $\cO(\bG)\to \cO(\bG)^{\otimes 2}$:
\begin{equation*}
  x\mapsto x_2\otimes S(x_1)x_3\quad\text{and}\quad x\mapsto x_1S(x_3) \otimes x_2
\end{equation*}

One piece of motivation for the material is the observation (cf. \Cref{re:clsnorm}) that classically, normal closed subgroups of linearly reductive algebraic groups are again linearly reductive. The non-commutative version of this remark, appearing as \Cref{th:isred} below, can be phrased (in somewhat weakened but briefer form) as follows.

\begin{theorem}
  A normal quantum subgroup $\bH\trianglelefteq \bG$ of a linearly reductive quantum group is again linearly reductive, provided the squared antipode of $\cO(\bH)$ leaves invariant all simple subcoalgebras of the latter.
\end{theorem}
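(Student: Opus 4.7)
My plan is to show that $\cO(\bH)$ is cosemisimple by producing a normalized (two-sided) integral on it, whose existence is equivalent to cosemisimplicity by Sweedler's classical theorem.

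The first ingredient is that, by the normality of $\bH \trianglelefteq \bG$, the left and right adjoint coactions of $\cO(\bG)$ on itself descend to make $\cO(\bH)$ an $\cO(\bG)$-bicomodule. Since $\cO(\bG)$ is cosemisimple, $\cO(\bH)$ becomes a locally finite semisimple $\cO(\bG)$-bicomodule: every element of $\cO(\bH)$ is contained in a finite-dimensional $\cO(\bG)$-sub-bicomodule, and in fact in a finite-dimensional $\cO(\bG)$-stable subcoalgebra of $\cO(\bH)$ (one iterates the $\cO(\bH)$-coproduct starting from the bicomodule, and the result stays finite-dimensional because it remains inside the relevant $\cO(\bG)$-isotypic components). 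It therefore suffices to prove that each such finite-dimensional $\cO(\bG)$-invariant subcoalgebra $W \subseteq \cO(\bH)$ is cosemisimple.

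For such a $W$, I would run a Weyl-style averaging argument. Given any short exact sequence of $W$-comodules, choose an arbitrary linear splitting and average it using the Haar functional on $\cO(\bG)$ via the adjoint coaction; this produces an $\cO(\bG)$-equivariant splitting. The $S^2$-hypothesis enters precisely at the step of converting this $\cO(\bG)$-equivariance into $\cO(\bH)$-colinearity: $S^2$-invariance of each simple subcoalgebra of $\cO(\bH)$ is equivalent to each irreducible $\cO(\bH)$-comodule being isomorphic to its own double dual, which supplies the rigidity needed to match the adjoint $\cO(\bG)$-action on $\cO(\bH)$ with the intrinsic $\cO(\bH)$-coaction appearing in the original sequence.

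The main obstacle is exactly this last compatibility --- translating $\cO(\bG)$-adjoint equivariance into $\cO(\bH)$-coequivariance. Without the $S^2$-hypothesis, the averaged splitting would be $\cO(\bG)$-equivariant but need not respect the original $\cO(\bH)$-comodule structure; the hypothesis forces the relevant modular twists to be trivial on each simple block, so that the averaged morphism is genuinely $\cO(\bH)$-colinear. Carefully verifying this compatibility, and patching the integrals that arise on each $W$ into a coherent global integral on $\cO(\bH)$, will be the technical heart of the proof.
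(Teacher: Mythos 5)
There is a genuine gap at the heart of your proposal: the averaging step has nothing to average over. The adjoint coactions make $\cO(\bH)$ itself into an $\cO(\bG)$-(bi)comodule, but they do \emph{not} induce an $\cO(\bG)$-comodule structure on an arbitrary $W$-comodule, nor on the space of linear splittings of a short exact sequence of $W$-comodules; so the phrase ``average it using the Haar functional on $\cO(\bG)$ via the adjoint coaction'' does not define a map. The classical picture already shows why this route is off: for $\bH\trianglelefteq\bG$ compact, conjugation by $\bG$ permutes the isomorphism classes of $\bH$-representations rather than acting on a fixed one, and one splits sequences of $\bH$-modules by integrating over $\bH$ itself --- whose Haar measure is exactly the object whose existence you are trying to establish. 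Relatedly, the claim that the $S^2$-hypothesis ``forces the relevant modular twists to be trivial'' so as to upgrade $\cO(\bG)$-equivariance to $\cO(\bH)$-colinearity is asserted but never given a mechanism; since quotients of cosemisimple Hopf algebras are in general \emph{not} cosemisimple (the Borel-subgroup example in the paper), whatever argument fills this hole must use normality in a much more structural way than the bicomodule structure alone. (A minor further point: no ``patching of integrals'' would be needed --- if every finite-dimensional subcoalgebra of $\cO(\bH)$ were cosemisimple, $\cO(\bH)$ would be a sum of simple subcoalgebras and hence cosemisimple outright; the integral comes for free afterwards.)

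For comparison, the paper's proof of \Cref{th:isred} takes a different and more combinatorial route. It first shows the two homogeneous spaces coincide and form a Hopf subalgebra $A\le\cO(\bG)$, quotients by $\cO(\bG)A^-$ to obtain a cosemisimple intermediate quotient $\cO(\bK)$ (using \cite[Theorem 2.5]{chi-ff} and the lemmas of \cite{ad}), and reduces to the case where $\cO(\bK/\bH)=k$, i.e.\ where the restriction functor $\mathrm{Rep}(\bK)\to\mathrm{Rep}(\bH)$ is \emph{full}. It then shows $\bH=\bK$ by contradiction: if some simple $V$ became reducible over $\bH$, either all its $\bH$-constituents are isomorphic (producing a non-scalar $\bH$-endomorphism of $V$, contradicting fullness), or there are two non-isomorphic simple constituents $V_1,V_2$, in which case the $S^2$-hypothesis gives $V_i\cong V_i^{**}$, so each coefficient coalgebra $V_i^*\otimes V_i\cong V_i^*\otimes V_i^{**}$ contains an $\bH$-invariant coevaluation vector, yielding too many $\bH$-invariants in $\pi(C_V)$ compared with $\bG$-invariants --- again contradicting fullness. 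Note in particular that the $S^2$-hypothesis enters through the concrete identification $V_i\cong V_i^{**}$ and an invariant-counting argument, not through any equivariance-upgrading of averaged splittings. If you want to salvage an integral-theoretic proof, you would at minimum need to identify a genuine $\cO(\bG)$- or $\cO(\bH)$-comodule structure on the relevant hom-spaces and explain where normality (beyond the bicomodule structure) and the $S^2$-condition actually intervene.
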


In particular, this recovers the classical version: in that case the squared antipode is trivial.

Keeping with the theme of what is (or isn't) afforded by normality, another motivating strand is that of {\it Clifford theory} (so named for \cite{clif}, where the relevant machinery was introduced). This is a suite of results relating the irreducible representations of a (finite, compact, etc.) group and those of a normal subgroup via induction/restriction functors; the reader can find a brief illuminating summary in \cite[\S 2]{cst} (in the context of finite groups).

Hopf-algebra analogues (both purely algebraic and analytic) abound. Not coming close to doing the literature justice, we will point to a selection: \cite{with-clif-hopf,with-clif-alg,bur-clif,sch-rep,oz}, say, and the references therein. \cite[\S 5, especially Theorem 5.4]{cks} provides a version for {\it compact quantum groups} \cite{wor-cqg}, which are (dual to) cosemisimple complex Hopf $*$-algebras with positive Haar integral (the {\it CQG algebras} of \cite[Definition 2.2]{dk}); they thus fit within the confines of the present paper.

The following result paraphrases and summarizes \Cref{th.simh}, \Cref{th.simb} and \Cref{pr.2rels}. To make sense of it:
\begin{itemize}
\item In the language of \Cref{se:cliff}, the surjection $\cO(\bG)\to \cO(\bH)$ of \Cref{th:clifsum} is $H\to B$.  
\item As explained in \Cref{se.prel}, for a quantum group $\bG$ the symbol $\widehat{\bG}$ denotes its category of irreducible representations (i.e. simple right $\cO(\bG)$-comodules).
\item $\mathrm{Ind}^{\bG}_{\bH}$ and $\mathrm{Res}^{\bG}_{\bH}$ denote the induction and restriction functors respectively, as discussed in \Cref{subse:resind}.
\end{itemize}

\begin{theorem}\label{th:clifsum}
  Let $\bH\trianglelefteq\bG$ be a normal embedding of linearly reductive quantum groups, and consider the binary relation $\sim$ on $\widehat{\bG}\times \widehat{\bH}$ defined by
  \begin{equation*}
    \widehat{\bG}\ni V\sim W\in \widehat{\bH}\Leftrightarrow \mathrm{hom}_{\bH}\left(\mathrm{Res}^{\bG}_{\bH}V,W\right)\ne 0\Leftrightarrow \mathrm{hom}_{\bG}\left(V,\mathrm{Ind}^{\bG}_{\bH}W\right)\ne 0.
  \end{equation*}
  The following statements hold.
  \begin{enumerate}[(a)]
  \item The left-hand slices
    \begin{equation*}
      \mathrm{slice}_W:=\{V\in \widehat{\bG}\ |\ V\sim W\},\ W\in \widehat{\bH}
    \end{equation*}
    of $\sim$ are the classes of an equivalence relation $\sim_{\bG}$, given by
    \begin{equation*}
      V\sim_{\bG} V'\Leftrightarrow \mathrm{Res}^{\bG}_{\bH}V\text{ and }\mathrm{Res}^{\bG}_{\bH}V'\text{ have the same simple constituents}. 
    \end{equation*}
  \item The right-hand slices
    \begin{equation*}
      {}_V\mathrm{slice}:=\{W\in \widehat{\bH}\ |\ V\sim W\},\ V\in \widehat{\bG}
    \end{equation*}
    are the finite classes of an equivalence relation.
  \end{enumerate}
\end{theorem}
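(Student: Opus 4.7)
The first step is a Frobenius-reciprocity translation. Since $\mathrm{Ind}^{\bG}_{\bH}$ is right adjoint to $\mathrm{Res}^{\bG}_{\bH}$, the relation $V\sim W$ is equivalent to asserting that $W$ is a simple $\bH$-constituent of $\mathrm{Res}^{\bG}_{\bH}V$, or equivalently (by semisimplicity of both comodule categories) that $V$ is a simple $\bG$-constituent of $\mathrm{Ind}^{\bG}_{\bH}W$. In particular, ${}_V\mathrm{slice}$ is precisely the set of simple $\bH$-constituents of $\mathrm{Res}^{\bG}_{\bH}V$, which is automatically finite since $V$ is finite-dimensional; this disposes of the finiteness half of (b). Symmetrically, $\mathrm{slice}_W$ is the set of simple $\bG$-constituents of $\mathrm{Ind}^{\bG}_{\bH}W$, and every $W\in\widehat{\bH}$ lies in some ${}_V\mathrm{slice}$ (by lifting the coefficient coalgebra of $W$ to a finite-dimensional subcoalgebra of $\cO(\bG)$, which necessarily decomposes as a sum of coefficient coalgebras of objects of $\widehat{\bG}$).

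For (a), take $\sim_{\bG}$ as defined in the statement: $V\sim_{\bG}V'$ iff $\mathrm{Res}^{\bG}_{\bH}V$ and $\mathrm{Res}^{\bG}_{\bH}V'$ have the same simple constituents. This is trivially an equivalence relation, and its classes will coincide with the slices $\mathrm{slice}_W$ once the following Clifford-theoretic key claim is verified: two irreducible $\bG$-representations that share even one simple $\bH$-constituent in restriction must share all of them. Granting this, $\mathrm{slice}_W$ is exactly the $\sim_{\bG}$-class of any $V$ whose restriction contains $W$, which settles (a); for (b), the right slices ${}_V\mathrm{slice}$ partition $\widehat{\bH}$ into finite blocks, producing the advertised equivalence relation.

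The key claim --- common constituent forces common constituent set --- is where the normality hypothesis becomes essential. By \Cref{def:norm}, both adjoint coactions of $\cO(\bG)$ on itself descend to $\cO(\bH)$, endowing the latter with $\cO(\bG)$-bicomodule structures under conjugation. Relative to these one can organize simple subcoalgebras of $\cO(\bH)$ into \emph{$\bG$-orbits}: $W, W'\in\widehat{\bH}$ are orbit-mates when the adjoint coaction carries the coefficient coalgebra of $W$ into $\cO(\bG)\otimes C$ for some subcoalgebra $C\subseteq\cO(\bH)$ containing that of $W'$, and symmetrically. The plan is then to show that for each $V\in\widehat{\bG}$, the set ${}_V\mathrm{slice}$ is exactly one $\bG$-orbit.

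Orbit-closure of ${}_V\mathrm{slice}$ should follow from naturality: applying the adjoint coaction to the coefficient coalgebra of $V$ stabilizes the isotypic decomposition of $\mathrm{Res}^{\bG}_{\bH}V$, forcing every conjugate of a constituent to be a constituent. The reverse inclusion --- any two constituents of $\mathrm{Res}^{\bG}_{\bH}V$ are orbit-mates --- is the principal obstacle, and where the argument genuinely departs from the classical one; there is no honest $\bG$-action on $\widehat{\bH}$, so instead the transitivity has to be extracted coalgebraically, presumably by producing non-zero intertwiners between $\bH$-isotypic components of $\mathrm{Res}^{\bG}_{\bH}V$ from the $\cO(\bG)$-comodule structure on $V$, or equivalently by tracking the interaction of simple subcoalgebras under multiplication in $\cO(\bG)$ restricted to the coefficient coalgebra of $V$. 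Once orbit-transitivity is established, (a) and (b) follow from the tautological reductions above.
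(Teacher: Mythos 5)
Your reductions are sound and match the paper's: Frobenius reciprocity identifies ${}_V\mathrm{slice}$ with $\mathrm{const}(\mathrm{Res}^{\bG}_{\bH}V)$ (hence finite), and both (a) and (b) do collapse onto the single key claim that two irreducibles of $\bG$ sharing one simple $\bH$-constituent share all of them. The problem is that you do not prove the key claim. Your orbit-based strategy is left incomplete at exactly the decisive point: you yourself flag the transitivity statement (``any two constituents of $\mathrm{Res}^{\bG}_{\bH}V$ are orbit-mates'') as ``the principal obstacle'' and say it should ``presumably'' be extracted by producing intertwiners from the coalgebra structure, without doing so. That is the entire content of the theorem; everything else is bookkeeping. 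Moreover, even the forward inclusion (orbit-closure of ${}_V\mathrm{slice}$) is only asserted ``by naturality'' without a precise definition of the orbit relation or a verification that it is an equivalence relation on $\widehat{\bH}$, so the scaffolding itself is not load-bearing as written.

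The paper proves the key claim by a different and complete mechanism, avoiding orbits altogether. Normality yields an exact sequence $k\to A\to \cO(\bG)\to\cO(\bH)\to k$ with $A=\cO(\bG/\bH)=\cO(\bH\backslash\bG)$ a Hopf subalgebra, over which $\cO(\bG)$ is faithfully flat by cosemisimplicity (\cite[Theorem 2.1]{chi-ff}). Takeuchi's equivalence $\cM^{\cO(\bG)}_A\simeq \cM^{\cO(\bH)}$ then converts the adjunction \Cref{eq:adj} into \Cref{eq:adj-alt}, giving
\begin{equation*}
  \mathrm{hom}_{\bH}\bigl(\mathrm{Res}^{\bG}_{\bH}V,\mathrm{Res}^{\bG}_{\bH}W\bigr)\cong \mathrm{hom}_{\bG}(V,W\otimes A).
\end{equation*}
Simplicity of $V$ forces any nonzero element of the right-hand side to be an embedding $V\hookrightarrow W\otimes A$ in $\mathrm{Rep}(\bG)$; since $A$ restricts to a sum of trivial $\bH$-comodules (exactness of the sequence), this gives $\mathrm{const}(V)\subseteq\mathrm{const}(W\otimes A)=\mathrm{const}(W)$ over $\bH$, and symmetry of the relation upgrades this to equality. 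You should either supply the missing transitivity argument for your orbit relation or replace that portion of your proof with this cotensor/relative-Hopf-module argument; as it stands the proposal does not constitute a proof.
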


A third branch of the present discussion has to do with the {\it relative centers} of the title: having defined the center $Z(\bG)$ of a linearly reductive quantum group (\Cref{def:cent}), and given a closed linearly reductive quantum subgroup $\bH\le \bG$, one can then make sense of the relative center $Z(\bG,\bH)$ as the {\it intersection} $\bH\cap Z(\bG)$; see \Cref{def:relcent}.

Though not immediately obvious, it follows from \cite[\S 3]{chk} (cited more precisely in the text below) that for embeddings $\bH,\bK\le \bG$ of linearly reductive quantum groups, operations such as the intersection $\bH\cap \bK$ and the quantum subgroup $\bH\bK$ generated by the two are well defined and behave as usual when $\bK$, say, is normal (hence the relevance of normality, again).

The initial spark of motivation for \Cref{se:rel} was provided by the main result of \cite{mug} (Theorem 3.1 therein), reconstructing the center of a compact group $\bG$ as a universal grading group for the category of $\bG$-representations. This generalizes to linearly reductive {\it quantum} groups \cite[Proposition 2.9]{chi-coc}, and, as it turns out, goes through in the relative setting; per \Cref{th:caniso}:

\begin{theorem}\label{th:canisopre}
  Let $\bH\le \bG$ be an embedding of linearly reductive quantum groups, and define the relative chain group $C(\bG,\bH)$ by generators $g_V$, $V\in \widehat{\bG}$ and relations $g_U=g_Vg_W$ whenever $U$ and $V\otimes W$ have common simple constituents over $\bH$.

  Then, the map
  \begin{equation*}
    C(\bG,\bH)\ni g_V\mapsto W\in \widehat{Z(\bG,\bH)}\quad\text{where}\quad \mathrm{Res}^{\bG}_{Z(\bG,\bH)}\cong \text{sum of copies of }W
  \end{equation*}
  is a group isomorphism. 
\end{theorem}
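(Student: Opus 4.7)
The plan is to exhibit the map $\phi: C(\bG,\bH) \to \widehat{Z(\bG,\bH)}$, sending $g_V$ to the common irreducible $Z(\bG,\bH)$-constituent of $\mathrm{Res}^{\bG}_{Z(\bG,\bH)}V$, as a well-defined, surjective group homomorphism, and then to reduce injectivity to the absolute case $\bH=\bG$ already treated in \cite[Proposition 2.9]{chi-coc} (recovering \cite{mug}).

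Well-definedness is a scalar-matching argument. Since $Z(\bG,\bH)\subseteq Z(\bG)$ is central in $\bG$, each $V\in\widehat{\bG}$ restricts to $Z(\bG,\bH)$ as copies of a single character $W_V$, giving a function $V\mapsto W_V$. Whenever $U$ and $V\otimes W$ share an $\bH$-irreducible constituent $X$, the scalar by which $Z(\bG,\bH)\subseteq \bH$ acts on $X$ matches both $W_U$ (as $X\subseteq U$) and $W_V W_W$ (as $X\subseteq V\otimes W$), forcing $W_U=W_V W_W$. Surjectivity is a Peter--Weyl consequence of the Hopf surjection $\cO(\bG)\twoheadrightarrow\cO(Z(\bG,\bH))$: the right regular $\cO(\bG)$-comodule contains every $V\in\widehat{\bG}$, so its image exhausts $\widehat{Z(\bG,\bH)}$.

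For injectivity I would first dualize: the relations of $C(\bG,\bG)$ form a subset of those of $C(\bG,\bH)$---if $U\subseteq V\otimes W$ then $U$ itself is a common $\bH$-constituent---yielding a canonical surjection $C(\bG,\bG)\twoheadrightarrow C(\bG,\bH)$, which, combined with the absolute iso $C(\bG,\bG)\cong \widehat{Z(\bG)}$, gives an embedding $\widehat{C(\bG,\bH)}\hookrightarrow Z(\bG)$. Specializing $W=\1$ in the relation shows the extra relations of $C(\bG,\bH)$ reduce to $g_U=g_{U'}$ for $U,U'$ sharing an $\bH$-constituent; dually, $\widehat{C(\bG,\bH)}$ is cut out in $Z(\bG)$ by the conditions $\chi_U(z)=\chi_{U'}(z)$ for all such $(U,U')$. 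The inclusion $Z(\bG,\bH)\subseteq \widehat{C(\bG,\bH)}$ is the scalar-matching argument once more, and Pontryagin duality then upgrades the identification $\widehat{C(\bG,\bH)}\cong Z(\bG,\bH)$ into the desired isomorphism $\phi$.

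The main obstacle is the converse inclusion: every $z\in Z(\bG)$ satisfying $\chi_U(z)=\chi_{U'}(z)$ for all $U,U'$ sharing an $\bH$-constituent must itself lie in $\bH$. The approach is to use this hypothesis to unambiguously define a function $\tilde\chi:\widehat{\bH}\to \bC^\times$ by $\tilde\chi(X):=\chi_V(z)$ for any $V\in\widehat{\bG}$ containing $X$; tensor-multiplicativity $\tilde\chi(X)=\tilde\chi(X_1)\tilde\chi(X_2)$ whenever $X\subseteq X_1\otimes X_2$ follows by choosing $X_i\subseteq V_i\in\widehat{\bG}$ and picking $V_\alpha\subseteq V_1\otimes V_2$ with $X\subseteq V_\alpha$, since then $\chi_{V_\alpha}(z)=\chi_{V_1\otimes V_2}(z)=\chi_{V_1}(z)\chi_{V_2}(z)$. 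Linear reductivity of $\bH$ then promotes $\tilde\chi$ to a character of $\cO(\bH)$ whose pullback agrees with $\chi_z:\cO(\bG)\to\bC$ on matrix coefficients of $\bG$-irreducibles, so $\chi_z$ factors through $\cO(\bG)\twoheadrightarrow \cO(\bH)$ and $z\in\bH$, as required.
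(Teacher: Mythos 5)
Your proposal is correct in substance but takes a genuinely different route for the crucial injectivity step. The paper gets surjectivity by factoring through the known isomorphism $C(\bG)\cong\widehat{Z(\bG)}$ and the surjection $\widehat{Z(\bG)}\to\widehat{Z(\bG,\bH)}$, and proves injectivity by machinery from \cite{chk}: given $V$ with trivial $Z(\bG,\bH)$-character, it invokes the isomorphism theorem $\bH/Z(\bG,\bH)\cong Z(\bG)\bH/Z(\bG)$ to extend $V|_{\bH}$ to a $Z(\bG)\bH$-representation with trivial $Z(\bG)$-action, induces up to $\bG$, and uses the cotensor adjunction to produce a simple $V_1$ with trivial central character and $\mathrm{hom}_{\bH}(V_1,V)\ne 0$, whence $g_V=g_{V_1}=e$. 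You instead observe (correctly, and as the paper also does via its \Cref{re:chains}) that $C(\bG,\bH)$ is the quotient of $C(\bG)\cong\widehat{Z(\bG)}$ by the subgroup $N$ generated by $\chi_U\chi_{U'}^{-1}$ for $U,U'$ sharing an $\bH$-constituent, and then identify the corresponding subgroup of $Z(\bG)$ with $Z(\bG)\cap\bH$ directly; the key construction --- extending the well-defined multiplicative function $\tilde\chi$ on $\widehat{\bH}$ to an algebra character of $\cO(\bH)$ so that $\chi_z$ factors through $\cO(\bG)\twoheadrightarrow\cO(\bH)$ --- is sound and is the real content. Your argument buys independence from the isomorphism theorems and the join $Z(\bG)\vee\bH$ of \cite{chk}, at the cost of handling the duality by hand.

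One caveat you should address: the paper works over an arbitrary algebraically closed field $k$, and your use of $\bC^\times$-valued characters and pointwise Pontryagin duality only suffices in characteristic zero. In characteristic $p$ a diagonalizable group scheme is not determined by its $k$-points (e.g.\ $\mu_p$), so equality of the two subgroups of $Z(\bG)$ on $k$-points does not yet give $\cO(Z(\bG,\bH))\cong k[C(\bG,\bH)]$. The fix is routine: either run your argument functorially with $R$-points for all commutative $k$-algebras $R$ (your $\tilde\chi$ construction goes through verbatim with values in $R^\times$), or, more cleanly, replace the character of $\cO(\bH)$ by the Hopf algebra map $\psi:\cO(\bH)\to k[C(\bG,\bH)]$ sending $a\in C_X$ to $\varepsilon(a)\,g_V$ for any $V\in\widehat{\bG}$ containing $X$ over $\bH$; well-definedness and multiplicativity are exactly your two lemmas, and $\psi\circ\pi$ exhibits $k[C(\bG,\bH)]$ as a quotient of $\cO(\bG)$ factoring through both $\cO(\bH)$ and $\cO(Z(\bG))$, hence through $\cO(Z(\bG,\bH))$, which together with the scalar-matching inclusion yields the isomorphism without ever evaluating at points.
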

Or, in words: mapping $g_V$ to the ``central character'' of $V$ restricted to $Z(\bG,\bH)$ gives an isomorphism $C(\bG,\bH)\cong \widehat{Z(\bG,\bH)}$. The ``plain'' (non-relative) version \cite[Proposition 2.9]{chi-coc} (and hence also its classical compact-group counterpart \cite[Theorem 3.1]{mug}) are recovered by setting $\bH=\bG$.

Although strictly speaking outside the scope of the present paper, some further remarks, suggestive of an intriguing connection to semisimple-Lie-group representation theory, will perhaps serve to further motivate the relative chain groups discussed in \Cref{th:canisopre}. 

\Cref{def:cg} was inspired by the study of plain (non-relative) chain groups of connected, semisimple Lie groups $\bG$ with finite center, studied in \cite[\S 4]{chi-cc}; specifically, the problem of whether
\begin{equation}\label{eq:hmdet}
  \mathrm{hom}_{\bH}(\sigma'',\sigma\otimes\sigma')\ne 0,\quad \sigma,\ \sigma',\ \sigma''\in \widehat{\bM}
\end{equation}
for a compact-group embedding $\bH\le \bM$ arises naturally while studying the direct-integral decomposition of a tensor product of two {\it principal-series} representations of such a Lie group $\bG$. To summarize, consider the setup of \cite{mart-dec} (to which we also refer, along with its own references, for background on the following).
\begin{itemize}
\item a connected, semisimple Lie group $\bG$ with finite center, with its {\it Iwasawa decomposition}
  \begin{equation*}
    \bG = \bK\bA\bN
  \end{equation*}
  ($\bK\le \bG$ maximal compact, $\bA$ abelian and simply-connected, $\bN$ nilpotent and simply-connected);
\item the corresponding decomposition
  \begin{equation*}
    \bP=\bM\bA\bN
  \end{equation*}
  of a minimal parabolic subgroup, with $\bM\le \bK$ commuting with $\bA$;
\item the resulting {\it principal-series} unitary representations
  \begin{equation*}
    \pi_{\sigma,\nu}:=\mathrm{Ind}_{\bP}^{\bG}(\sigma\otimes\nu\otimes\mathrm{triv}),
  \end{equation*}
  where $\sigma\in \widehat{\bM}$ and $\nu\in\widehat{\bA}$ unitary irreducible representations over those groups.
\end{itemize}
One is then interested in which $\pi_{\sigma'',\nu''}$ are {\it weakly contained} \cite[Definition F.1.1]{bdv} in tensor products $\pi_{\sigma,\nu}\otimes \pi_{\sigma',\nu'}$ (i.e. feature in a direct-integral decomposition of the latter); we write
\begin{equation*}
  \pi_{\sigma'',\nu''}\preceq \pi_{\sigma,\nu}\otimes \pi_{\sigma',\nu'}. 
\end{equation*}
It turns out that in the cases worked out in the literature there is a closed subgroup $\bH\le \bM$ that determines this weak containment via \Cref{eq:hmdet}. Examples:
\begin{itemize}
\item When the (connected, etc.) Lie group $\bG$ is {\it complex}, one can simply take $\bH=\bZ(\bG)$ (the center of $\bG$, which is always automatically contained in $\bM$). This follows, for instance, from \cite[Theorem 3.5.5]{wil-tens} in conjunction with \cite[Theorems 1 and 2]{mart-dec}. 
\item For $\bG=\mathrm{SL}(n,\bR)$, $n\ge 2$ one can again set $\bH=Z(\bG)$: \cite[\S 4]{repk} for $n=2$ and \cite[p.210, Theorem]{mart-dec} for the rest. 
\item Finally, for {\it real-rank-one} $\bG$ the main result of \cite{mart-dec}, Theorem 16 of that paper, provides such an $\bH\le \bM$ (denoted there by $\bM_0$; it is in general non-central, and in fact not even normal).
\end{itemize}
The phenomenon presumably merits some attention in its own right.

\subsection*{Acknowledgements}

This work is partially supported by NSF grant DMS-2001128

\section{Preliminaries}\label{se.prel}

Everything in sight (algebras, coalgebras, etc.) will be linear over a fixed algebraically closed field $k$. We assume some background on coalgebras and Hopf algebras, as covered by any number of good sources such as \cite{swe,abe,mont,rad}. 

\begin{notation}
  A number of notational conventions will be in place throughout. 
  \begin{itemize}
  \item $\Delta$, $\varepsilon$ and $S$ denote, respectively, coproducts, counits and antipodes. They will occasionally be decorated with letters indicating which coalgebra, Hopf algebra, etc. they are attached to; $S_{H}$, for instance, is the antipode of the Hopf algebra $H$. 
  \item We use an un-parenthesized version of {\it Heyneman-Sweedler notation} (\cite[Notation 1.4.2]{mont} or \cite[\S 2.1]{rad}):
    \begin{equation*}
      \Delta(c) = c_1\otimes c_2,\ ((\Delta\otimes\id)\circ \Delta)(c) = c_1\otimes c_2\otimes c_3 
    \end{equation*}
    and so on for coproducts and
    \begin{equation*}
      c\mapsto c_0\otimes c_1,\quad c\mapsto c_{-1}\otimes c_0
    \end{equation*}
    for right and left comodule structures respectively.   
  \item $\cO(\bG)$, $\cO(\bH)$, and so on denote Hopf algebras over a fixed algebraically closed field $k$; they are to be thought of as algebras of representative functions on linear algebraic {\it quantum groups} $\bG$, $H$, etc.
  \item An {\it embedding} $\bH\le \bG$ of quantum groups means a Hopf algebra surjection $\cO(\bG)\twoheadrightarrow \cO(\bH)$ and more generally, a morphism $\bH\to \bG$ is one of Hopf algebras in the opposite direction $\cO(\bG)\to \cO(\bH)$.
  \item Categories of (co)modules are denoted by $\cM$, decorated with the symbol depicting the (co)algebra, with the left/right position of the decoration matching the chirality of the (co)module structure. Examples: ${}_A\cM$ means left $A$-modules, $\cM^C$ denotes right $C$-comodules, etc. Comodule structures are right unless specified otherwise.     
  \item These conventions extend to {\it relative Hopf modules} (\cite[\S 8.5]{mont} or \cite[\S 9.2]{rad}): if, say, $A$ is a right comodule algebra \cite[Definition 4.1.2]{mont} over a Hopf algebra $H$ with structure
    \begin{equation*}
      A\ni a\mapsto a_0\otimes a_1\in A\otimes H
    \end{equation*}
    then $\cM^H_A$ denotes the category of right $A$-modules internal to $\cM^H$; that is, right $A$-modules $M$ that are also right $H$-comodules via
    \begin{equation*}
      m\mapsto m_0\otimes m_1
    \end{equation*}
    such that
    \begin{equation*}
      (ma)_0\otimes (ma)_1 = m_0a_0\otimes m_1a_1.
    \end{equation*}
    There are analogues $\cM_H^C$, say, for right $H$-module coalgebras $C$, left- or half-left-handed versions thereof, and so on. 
  \item An additional `$f$' adornment on one of the above-mentioned categories means {\it finite-dimensional} (co)modules: $\cM^C_f$ is the category of finite-dimensional right $C$-comodules, for instance.
  \item Reprising a convention common in the operator-algebra literature (e.g. \cite[\S 2.3.2, \S 18.1.1]{dixc}), $\widehat{C}$ denotes the isomorphism classes of simple and hence finite-dimensional \cite[Theorem 5.1.1]{mont} (right, unless specified otherwise) $C$-comodules and $\widehat{\bG}=\widehat{\cO(\bG)}$.

    The purely-algebraic and operator-algebraic notations converge when $\bG$ is compact and $\cO(\bG)$ denotes the Hopf algebra of representative functions on $\bG$: $\widehat{\bG}$ as defined above can then be identified with the set of isomorphism classes of irreducible unitary $\bG$-representations.
  \item In the same spirit, it will also occasionally be convenient to write
    \begin{equation*}
      \mathrm{Rep}(\bG):=\cM^{\cO(\bG)}. 
    \end{equation*}
  \end{itemize}
\end{notation}

The linear algebraic quantum groups $\bG$ in the sequel will frequently be {\it linearly reductive}, in the sense that the Hopf algebra $\cO(\bG)$ is {\it cosemisimple} \cite[\S 2.4]{mont}: $\mathrm{Rep}(\bG)$ is a semisimple category, i.e. every comodule is a direct sum of simple subcomodules. Equivalently (\cite[Definition 2.4.1]{mont}), $\cO(\bG)$ is a direct sum of simple subcoalgebras.

Cosemisimple Hopf algebras $H$ are equipped with unique unital {\it integrals} $\int:H\to k$ \cite[Theorem 2.4.6]{mont} and hence have bijective antipodes (by \cite[Corollary 5.4.6]{dnr}, say); more is true, though. Still assuming $H$ cosemisimple, for a simple comodule $V\in \widehat{H}$ the canonical coalgebra morphism
\begin{equation*}
  \mathrm{End}(V)^*\cong V^*\otimes V\to H
\end{equation*}
(conceptually dual to the analogous map $A\to \mathrm{End}(V)$ giving $V$ a module structure over an algebra $A$) is one-to-one and gives the direct-sum decomposition
\begin{equation}\label{eq:pwdec}
  H=\bigoplus_{V\in \widehat{H}} (V^*\otimes V) = \bigoplus_{V\in \widehat{H}} C_V
\end{equation}
into simple subcoalgebras $C_V:=V^*\otimes V$ (the {\it Peter-Weyl} decomposition, in compact-group parlance: \cite[Definition 2.2]{dk}, \cite[Theorem 27.40]{hr2}, etc.) that makes $H$ cosemisimple to begin with. With this in place, not only is the antipode $S:=S_H$ bijective but in fact its square leaves every $C_V$, $V\in \widehat{H}$ invariant and acts as an automorphism thereon \cite[Theorem 7.3.7]{dnr}. 

We refer to $C_V=V^*\otimes V$ as the {\it coefficient coalgebra} of the simple $H$-comodule $V$. This is the coalgebra associated to $V$ in \cite[Proposition 2.5.3]{dnr}, and is the smallest subcoalgebra $C\le H$ for which the comodule structure
\begin{equation*}
  V\to V\otimes H
\end{equation*}
factors through $V\otimes C$. 

\subsection{Restriction, induction and the like}\label{subse:resind}

Given a coalgebra morphism $C\to D$, the {\it cotensor product} (\cite[Definition 8.4.2]{mont} or \cite[\S 10]{bw}) $-\square_D C$ is right adjoint to the natural ``scalar corestriction'' functor $\cM^C\to \cM^D$:
\begin{equation}\label{eq:cotens}
  \begin{tikzpicture}[auto,baseline=(current  bounding  box.center)]
    \path[anchor=base] 
    (0,0) node (l) {$\cM^C$}
    +(2,0) node (m) {$\bot$}
    +(4,0) node (r) {$\cM^D$}
    ;
    \draw[->] (l) to[bend left=16] node[pos=.5,auto] {$\scriptstyle \text{cores}$} (r);
    \draw[->] (r) to[bend left=16] node[pos=.5,auto] {$\scriptstyle -\square_DC$} (l);
  \end{tikzpicture}
\end{equation}
the central symbol indicating that the top functor is the left adjoint. When $\bH\le \bG$ is, say, an inclusion of compact groups and $C\to D$ the corresponding surjection $\cO(\bG)\to \cO(\bH)$ of algebras of representative functions, the cotensor functor
\begin{equation*}
  -\square_{\cO(\bH)}\cO(\bG):\mathrm{Rep}(\bH)\to \mathrm{Rep}(\bG)
\end{equation*}
is naturally isomorphic with the usual {\it induction} $\mathrm{Ind}_H^\bG$ \cite[p.82]{rob}. For that reason we repurpose this same notation for the general setting of quantum-group inclusions, writing
\begin{equation*}
  \mathrm{Ind}_{\bH}^\bG:= -\square_{\cO(\bH)}\cO(\bG):\mathrm{Rep}(\bH) \to \mathrm{Rep}(\bG)
\end{equation*}
for any quantum-group inclusion $\bH\le \bG$; for consistency, we also occasionally also denote the rightward functor in \Cref{eq:cotens} by
\begin{equation*}
  \mathrm{Res}_{\bH}^\bG:\mathrm{Rep}(\bG) \to \mathrm{Rep}(\bH).
\end{equation*}

\section{Normal subgroups and automatic reductivity}\label{se:norm}

Consider a quantum group embedding $\bH\le \bG$, expressed as a surjective Hopf-algebra morphism $\pi:\cO(\bG)\to \cO(\bH)$. As is customary in the literature on quantum homogeneous spaces (e.g. \cite[proof of Theorem 2.7]{wnorm}), we write
\begin{align*}
  \cO(\bG/\bH) &:= \{x\in\cO(\bG)\ |\ (\id\otimes\pi)\Delta(x) = x\otimes 1\}\\
  \cO(\bH\backslash \bG) &:= \{x\in\cO(\bG)\ |\ (\pi\otimes\id)\Delta(x) = 1\otimes x\}.\\
\end{align*}

According to \cite[Definition 1.1.5]{ad} a quantum subgroup $\bH\le \bG$ would be termed {\it normal} provided the two quantum homogeneous spaces $\cO(\bG/\bH)$ and $\cO(\bH\backslash \bG)$ coincide. This will not quite do for our purposes (see \Cref{ex:borel}), so instead we follow \cite[\S 1.5]{pw} (also, say, \cite[Definition 2.6]{wnorm}, relying on the same source) in the following

\begin{definition}\label{def:norm}
  The quantum subgroup $\bH\le \bG$ cast as the surjection $\pi:\cO(\bG)\to \cO(\bH)$
  \begin{itemize}
  \item {\it left-normal} if $\pi$ is a morphism of left $\cO(\bG)$-comodules under the left adjoint coaction
    \begin{equation*}
      \mathrm{ad}_{l}:=\mathrm{ad}_{l,\bG}:x\mapsto x_1S(x_3)\otimes x_2.
    \end{equation*}
  \item {\it right-normal} if similarly, $\pi$ is a morphism of right $\cO(\bG)$-comodules under the right adjoint coaction
    \begin{equation}\label{eq:radj}
      \mathrm{ad}_{r}:=\mathrm{ad}_{r,\bG}:x\mapsto x_2\otimes S(x_1)x_3.
    \end{equation}
  \item {\it normal} if it is both left- and right-normal. 
  \end{itemize}
\end{definition}

The following result is essentially a tautology in the framework of \cite[\S 1.2]{chk}, but only because in that paper the definition of a normal quantum subgroup is more restrictive (see \cite[Definition 1.2.3]{chk}, which makes an additional (co)flatness requirement).

\begin{theorem}\label{th:isred}
  Let $\bH\le \bG$ be a left- or right-normal quantum subgroup of a linearly reductive group such that $S^2$ leaves every simple subcoalgebra of $\cO(\bH)$.

  $\bH$ is then linearly reductive and normal.
\end{theorem}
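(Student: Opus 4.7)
Write $H := \cO(\bG)$, $K := \cO(\bH)$, and $\pi : H \twoheadrightarrow K$. My plan is to first establish cosemisimplicity of $K$, and then recover full normality from the one-sided hypothesis as a byproduct. The central reduction uses the Peter-Weyl decomposition $H = \bigoplus_{V\in\widehat{\bG}} C_V$ from \Cref{eq:pwdec}: surjectivity of $\pi$ gives $K = \sum_V D_V$ with $D_V := \pi(C_V)$, and a sum of cosemisimple subcoalgebras is again cosemisimple, so it suffices to show each $D_V$ is cosemisimple. Since $C_V \cong V^*\otimes V$ has matrix-algebra dual $\mathrm{End}(V)$ and $D_V$ is a quotient coalgebra of $C_V$, the dual $A_V := D_V^*$ embeds as a unital subalgebra of $\mathrm{End}(V)$; cosemisimplicity of $D_V$ is equivalent to semisimplicity of $A_V$.

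Now the work is to show each $A_V \subseteq \mathrm{End}(V)$ is semisimple, and both hypotheses enter. By the symmetry explained below I may assume $\pi$ is right-normal; this descends the right adjoint coaction on $H$ to a well-defined right $H$-coaction $\rho$ on $K$ given by $\rho(\pi(x)) = \pi(x_2)\otimes S(x_1)x_3$. Since $C_V$ is a subcoalgebra of $H$, $\rho$ restricts to a coaction on each $D_V$. Averaging against the integral $t : H \to k$ furnishes an idempotent $E_V := (\id\otimes t)\circ\rho$ on $D_V$ projecting onto the adjoint-coinvariants, and this is the main source of averaging needed to mimic Maschke's argument. Independently, $S^2_H(C_V) = C_V$ by cosemisimplicity of $H$, so the intertwining $\pi\circ S^2_H = S^2_K\circ\pi$ yields $S^2_K(D_V) = D_V$; combined with the hypothesis that $S^2_K$ setwise fixes every simple subcoalgebra of $K$ inside $D_V$, this equips $A_V$ with a compatible involutive structure. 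The two ingredients together should produce a non-degenerate trace-like form on $A_V$ via a finite-dimensional Larson-Radford-style argument, forcing $A_V$ to be semisimple.

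For the normality claim, once $K$ is cosemisimple its antipode $S_K$ is bijective, and a direct computation with $\Delta\circ S = \tau\circ(S\otimes S)\circ\Delta$ (with $\tau$ the tensor flip) yields $\mathrm{ad}_l\circ S = \tau\circ(S\otimes S)\circ\mathrm{ad}_r$. Since $\ker\pi$ is $S_H$-stable as a Hopf ideal, this identity immediately transports $\mathrm{ad}_r$-stability of $\ker\pi$ to $\mathrm{ad}_l$-stability and vice versa, upgrading one-sided normality to two-sided normality. This same observation justifies assuming right-normality at the start.

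\textbf{Main obstacle.} The technical heart lies in the second paragraph: leveraging the global $S^2$-hypothesis on simple subcoalgebras of $K$ together with the normality-supplied projection $E_V$ to produce a non-degenerate trace witnessing semisimplicity of the specific finite-dimensional subalgebra $A_V\subseteq\mathrm{End}(V)$. The $S^2$ condition is cheap to state but subtle to use, and getting it to cooperate with the adjoint-coaction descent on each Peter-Weyl block is where I expect the bulk of the work to live; both hypotheses are genuinely needed, so neither can be circumvented by a soft Hopf-algebraic argument.
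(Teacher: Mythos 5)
There is a genuine gap, and you name it yourself: the entire cosemisimplicity claim is reduced to showing each $A_V=\pi(C_V)^*\subseteq\mathrm{End}(V)$ is semisimple, and then the construction of the non-degenerate trace that would witness this is left as something the two hypotheses ``should produce'' via an unspecified Larson--Radford-style argument. That is the whole theorem; the reductions surrounding it (Peter--Weyl blocks, duality between cosemisimplicity of a finite-dimensional coalgebra and semisimplicity of its dual, summing up the blocks) are correct but routine. I am moreover skeptical that the plan as stated can be completed: Larson--Radford trace formulas live on a Hopf algebra carrying its own integral, and a unital integral on $\cO(\bH)$ is essentially equivalent to the cosemisimplicity you are trying to prove. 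Your averaging operator $E_V=(\id\otimes t)\circ\rho$ uses the integral of $\cO(\bG)$ against the descended adjoint coaction and projects onto adjoint coinvariants of $\pi(C_V)$; there is no visible bridge from that projection to a non-degenerate trace on the subalgebra $A_V$ of $\mathrm{End}(V)$, and the $S^2$-hypothesis is about simple subcoalgebras of $\cO(\bH)$, which need not sit inside any single block $\pi(C_V)$ in a way that interacts with $E_V$. A smaller but real gap: in the normality upgrade you invoke only $S(\cK)\subseteq\cK$ for $\cK=\ker\pi$ (automatic for a Hopf ideal), but the identity $\mathrm{ad}_l\circ S=\tau\circ(S\otimes S)\circ\mathrm{ad}_r$ transports $\mathrm{ad}_r$-stability to $\mathrm{ad}_l$-stability only if $S(\cK)=\cK$; the reverse inclusion is not automatic (\Cref{ex:nobij}) and is exactly what \Cref{le:bijant} supplies using cosemisimplicity of $\cO(\bG)$.

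For contrast, the paper does not argue block by block at all. It first shows $\cO(\bG/\bH)=\cO(\bH\backslash\bG)=:A$ is a Hopf subalgebra stable under the adjoint action, quotients $\cO(\bG)$ by the two-sided ideal $\cO(\bG)A^-$ to obtain a cosemisimple intermediate quotient $\cO(\bK)$ (this is where the heavy lifting is outsourced, to \cite[Theorem 2.5]{chi-ff}), and thereby reduces to the case where the homogeneous space is trivial, i.e.\ the restriction functor $\mathrm{Rep}(\bG)\to\mathrm{Rep}(\bH)$ is full. It then shows fullness forces $\bH=\bG$ in that reduced situation: if some simple $V$ became reducible over $\bH$, either one gets a non-scalar $\bH$-endomorphism of $V$, or two non-isomorphic simple constituents $V_1,V_2$ whose coefficient coalgebras, by the $S^2$-hypothesis, satisfy $V_i\cong V_i^{**}$ and hence each contribute an $\bH$-invariant coevaluation vector to $\pi(C_V)$ --- too many invariants for fullness. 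So the $S^2$ condition is used to manufacture invariant vectors, not an involutive or trace structure on a dual algebra. If you want to salvage your outline, the step you must actually supply is precisely the one you flagged, and you should expect to need a global mechanism (such as the faithful-flatness/quotient cosemisimplicity input above) rather than a purely blockwise one.
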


\begin{remark}
  The condition that $S^2$ leave invariant the simple subcoalgebras is certainly necessary for cosemisimplicity \cite[Theorem 7.3.7]{dnr}, but I do not know if it is redundant as a hypothesis in the context of \Cref{th:isred}. 
\end{remark}

In particular, the squared-antipode condition of \Cref{th:isred} is automatic when $S^2=\id$ (i.e. when $\cO(\bG)$, or $\bG$, is {\it involutory} or {\it involutive} \cite[Definition 7.1.12]{rad}). We thus have

\begin{corollary}
  Left- or right-normal quantum subgroups of involutive linearly reductive quantum groups are normal and linearly reductive.  \qedhere
\end{corollary}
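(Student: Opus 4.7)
The plan is to invoke \Cref{th:isred} directly; the only task is to verify that its squared-antipode hypothesis is automatically satisfied in the involutive case.

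Write $\pi:\cO(\bG)\twoheadrightarrow\cO(\bH)$ for the surjection of Hopf algebras corresponding to the embedding $\bH\le\bG$. Since $\pi$ respects antipodes,
\begin{equation*}
  \pi\circ S_{\cO(\bG)} = S_{\cO(\bH)}\circ \pi,
\end{equation*}
and squaring yields $\pi\circ S^2_{\cO(\bG)} = S^2_{\cO(\bH)}\circ \pi$. The hypothesis that $\bG$ is involutive means $S^2_{\cO(\bG)} = \id$, and surjectivity of $\pi$ then forces $S^2_{\cO(\bH)} = \id$ on $\cO(\bH)$ as well.

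With $S^2_{\cO(\bH)}$ equal to the identity, every subspace of $\cO(\bH)$ is trivially stable under $S^2$; in particular every simple subcoalgebra is. The hypothesis of \Cref{th:isred} therefore holds, and its conclusion --- that $\bH$ is both normal and linearly reductive --- is exactly the claim.

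There is no genuine obstacle here: essentially the entire content of the corollary is absorbed into \Cref{th:isred}, and the only independent observation needed is the bookkeeping remark that involutivity passes from a Hopf algebra to any of its Hopf-algebra quotients, which renders the squared-antipode hypothesis vacuous.
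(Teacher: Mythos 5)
Your proposal is correct and follows exactly the paper's route: the paper proves this corollary simply by observing (in the sentence preceding it) that the squared-antipode hypothesis of \Cref{th:isred} is automatic in the involutive case, and your only additional content --- that a surjective Hopf algebra morphism intertwines antipodes, so involutivity of $\cO(\bG)$ forces $S^2_{\cO(\bH)}=\id$ --- is the same standard fact the paper itself invokes in the proof of \Cref{le:bijant}. Nothing is missing.
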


The proof of \Cref{th:isred} requires some preparation. First, a simple remark for future reference.

\begin{lemma}\label{le:bijant}
  Let $\pi:H\to K$ be a surjective morphism of Hopf algebras with $H$ cosemisimple. $K$ then has bijective antipode, and hence $\pi$ intertwines antipode inverses. 
\end{lemma}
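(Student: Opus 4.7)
The plan is to deduce both claims from the fact that $K$ inherits cosemisimplicity from $H$, combined with the general principle (already invoked in the preliminaries) that cosemisimple Hopf algebras have bijective antipode.

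First I would verify that $K$ is cosemisimple. Decompose $H=\bigoplus_{V\in\widehat H}C_V$ into its simple subcoalgebras as in \Cref{eq:pwdec}. Any coalgebra map out of a simple coalgebra has a subcoalgebra as kernel, so by simplicity the restriction $\pi|_{C_V}$ is either zero or injective; in either case $\pi(C_V)\subseteq K$ is a (zero or simple) subcoalgebra. Since $\pi$ is surjective, $K=\sum_{V\in\widehat H}\pi(C_V)$ is a sum of simple subcoalgebras, hence cosemisimple by \cite[Definition 2.4.1]{mont}. By \cite[Corollary 5.4.6]{dnr} (precisely the citation used earlier for $H$ itself), the antipode $S_K$ is therefore bijective.

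For the second assertion, $\pi$ being a morphism of Hopf algebras already gives $S_K\circ\pi=\pi\circ S_H$. Composing on the left with $S_K^{-1}$ and on the right with $S_H^{-1}$ (both of which exist by the preceding paragraph and by cosemisimplicity of $H$) yields $\pi\circ S_H^{-1}=S_K^{-1}\circ\pi$. The only mildly non-routine input throughout is the observation that coalgebra maps out of simple coalgebras are either zero or injective; everything else is a direct unpacking of definitions.
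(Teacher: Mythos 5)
There is a genuine gap, and it sinks the whole approach: the kernel of a coalgebra morphism is a \emph{coideal}, not a subcoalgebra, so simplicity of $C_V$ does not force $\pi|_{C_V}$ to be zero or injective. Dually, a simple coalgebra is (the dual of) a matrix algebra, and matrix algebras have plenty of proper nonzero subalgebras; correspondingly $C_V$ has many proper nonzero quotient coalgebras, and $\pi(C_V)\cong C_V/(\ker\pi\cap C_V)$ need not be simple, nor even cosemisimple. Consequently your first paragraph's conclusion --- that $K$ is automatically cosemisimple --- is false. The paper itself contains a counterexample: in \Cref{ex:borel} the surjection $\cO(\bG)\to\cO(\bB)$ onto the function algebra of a Borel subgroup has cosemisimple source but non-cosemisimple target. (Indeed, if quotients of cosemisimple Hopf algebras were always cosemisimple, \Cref{th:isred} would be vacuous.) The second paragraph of your argument is fine as far as it goes, but it rests entirely on the bijectivity of $S_K$, which is exactly what remains unproved.

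The fix is to prove bijectivity of $S_K$ without going through cosemisimplicity of $K$. The paper does this by showing directly that $S_H^{-1}$ preserves $\ker\pi$, so that it descends to an inverse for $S_K$ on the quotient: using the decomposition \Cref{eq:pwdec}, the antipode restricts to injections $S:\ker(\pi|_{C_i})\to\ker(\pi|_{S(C_i)})$, and since $S^2$ preserves each simple subcoalgebra $C_i$ of the cosemisimple Hopf algebra $H$ \cite[Theorem 7.3.7]{dnr}, the composite $S^2$ is an injective endomorphism of the finite-dimensional space $\ker(\pi|_{C_i})$, hence bijective; this forces each restriction of $S$ to be bijective and therefore $S(\ker\pi)=\ker\pi$. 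Note that \Cref{ex:nobij} shows this invariance of $\ker\pi$ under $S^{-1}$ genuinely uses cosemisimplicity of $H$ and is not a formal consequence of $S_H$ being bijective.
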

\begin{proof}
  That a morphism of bialgebras intertwines antipodes or antipode inverses as soon as these exist is well known, so we focus on the claim that $S_{K}$ is bijective.

  By the very definition of cosemisimplicity $H$ is the direct sum of its simple (hence finite-dimensional \cite[Theorem 5.1.1]{mont}) subcoalgebras $C_i\le H$. The assumption is that $\pi$ is a morphism of Hopf algebras, so the antipode $S:=S_H$ restricts to maps
  \begin{equation}\label{eq:fsts}
    S:\ker(\pi|_{C_i})\to \ker(\pi|_{S(C_i)}),
  \end{equation}
  injective because $S$ is bijective. On the other hand though, for cosemisimple Hopf algebras the squared antipode leaves every subcoalgebra invariant \cite[Theorem 7.3.7]{dnr}, so
  \begin{equation*}
    S^2:\ker(\pi|_{C_i})\to \ker(\pi|_{S^2(C_i)}) = \ker(\pi|_{C_i}),
  \end{equation*}
  being a one-to-one endomorphism of a finite-dimensional vector space, must be bijective. Since that map decomposes as \Cref{eq:fsts} followed by its (similarly one-to-one) analogue defined on $S(C_i)$, \Cref{eq:fsts} itself must be bijective, and hence the inverse antipode $S^{-1}$ leaves $\ker(\pi)$ invariant. This, in essence, was the claim.
\end{proof}

The conclusion of \Cref{le:bijant} is by no means true of arbitrary bijective-antipode Hopf algebras $H$:

\begin{example}\label{ex:nobij}
  \cite[Theorem 3.2]{schau-ff} gives an example of a Hopf algebra $H$ with bijective antipode and a Hopf ideal $I\trianglelefteq H$ that is not invariant under the inverse antipode. In other words, even though $H$ has bijective antipode, the quotient Hopf algebra $H\to H/I$ does not. 
\end{example}

\pf{th:isred}
\begin{th:isred}
  The proof proceeds gradually.
  
  \vspace{.5cm}

  {\bf Step 1: normality.} According to \Cref{le:bijant} the antipode $S:=S_{\cO(\bG)}$ and its inverse both leave the kernel $\cK$ of the surjection
  \begin{equation*}
    \pi:\cO(\bG)\to \cO(\bH)
  \end{equation*}
  invariant, so $S(\cK)=\cK$. The fact that left- and right-normality are equivalent now follows from \cite[Proposition 1.5.1]{pw}. 
  
  \vspace{.5cm}

  {\bf Step 2: The homogeneous spaces $\bG/\bH$ and $\bH\backslash \bG$ coincide.} This means that
  \begin{equation}\label{eq:homogs}
    \cO(\bH\backslash \bG) = \cO(\bG/\bH) =: A,
  \end{equation}
  and follows from \cite[Lemma 1.1.7]{ad}.  

  \vspace{.5cm}

  {\bf Step 3: Reduction to trivial $\bG/\bH$.} The subspace $A\le \cO(\bG)$ of \Cref{eq:homogs} is in fact a Hopf subalgebra \cite[Lemma 1.1.4]{ad}. $A$ is also invariant under the right adjoint action
  \begin{equation*}
    \cO(\bG)\otimes \cO(\bG) \ni x\otimes y\mapsto S(y_1) x y_2\in \cO(\bG)
  \end{equation*}
  (\cite[Lemma 1.20]{chk}), so by \cite[Lemma 1.1.11]{ad} the left ideal
  \begin{equation*}
    \cO(\bG)A^-\le \cO(\bG)\text{ where }A^-:=\mathrm{ker}(\varepsilon|_A)
  \end{equation*}
  is bilateral. The quotient $\cO(\bG)/\cO(\bG)A^-$ must then be a {\it cosemisimple} quotient Hopf algebra \cite[Theorem 2.5]{chi-ff} $\cO(\bG)\to \cO(\bK)$, and we have an exact sequence
  \begin{equation*}
    \begin{tikzpicture}[auto,baseline=(current  bounding  box.center)]
      \path[anchor=base] 
      (0,0) node (ll) {$k$}
      +(1.5,.5) node (l) {$\cO(\bG/\bK)$}
      +(3.5,.5) node (m) {$\cO(\bG)$}
      +(5.5,.5) node (r) {$\cO(\bK)$}
      +(7,0) node (rr) {$k$}
      +(1.5,0) node () {$\parallel$}
      +(1.5,-.5) node () {$\cO(\bG/\bH)$}
      ;
      \draw[->] (ll) to[bend left=6] node[pos=.5,auto] {$\scriptstyle $} (l);
      \draw[->] (l) to[bend left=6] node[pos=.5,auto] {$\scriptstyle $} (m);
      \draw[->] (m) to[bend left=6] node[pos=.5,auto] {$\scriptstyle $} (r);
      \draw[->] (r) to[bend left=6] node[pos=.5,auto] {$\scriptstyle $} (rr);
    \end{tikzpicture}
  \end{equation*}
  of quantum groups in the sense of \cite[\S 1.2]{ad}, with everything in sight cosemisimple. Since furthermore $A^-$ is annihilated by the original surjection $\cO(\bG)\to \cO(\bH)$, $\bH$ can be thought of as a quantum subgroup of $\bK$ (rather than $\bG$):
  \begin{equation*}
    \cO(\bK)\to \cO(\bH).
  \end{equation*}
  I now claim that the corresponding homogeneous space is trivial:
  \begin{equation}\label{eq:khhk}
    \cO(\bK/\bH) = \cO(\bH\backslash \bK) = k.
  \end{equation}    
  To see this, consider a simple representation $V\in\widehat{\bK}$ that contains invariant vectors over $\bH$. Because $\cO(\bK)$ is cosemisimple, $V$ is a subcomodule (rather than just a subquotient) of a simple comodule $W\in \widehat{\bG}$, and it follows that
  \begin{equation*}
    W|_{\bH}\ge V|_{\bH}
  \end{equation*}
  contains invariant vectors. The fact that \Cref{eq:homogs} is a Hopf subalgebra means that it is precisely
  \begin{equation*}
    \bigoplus_{U}C_U,\ U\in\widehat{\bG}\text{ and }U|_{\bH}\text{ has invariant vectors},
  \end{equation*}
  so $C_W\le A$ and the restriction $W|_{\bK}$ decomposes completely as a sum of copies of the trivial comodule $k$. But then $V\le W|_{\bK}$ itself must be trivial, proving the claim \Cref{eq:khhk}. Now simply switch the notation back to $\bG:=\bK$ to conclude Step 3:
  \begin{equation}\label{eq:nohomog}
    \cO(\bG/\bH) = \cO(\bH\backslash \bG) = k.
  \end{equation}
  This latter condition simply means that for an $\cO(\bG)$-comodule $V$ its $\bG$- and $\bH$-invariants coincide:
  \begin{equation*}
    \mathrm{hom}_{\bG}(k,V) = \mathrm{hom}_{\bH}(k,V). 
  \end{equation*}
  Equivalently, since
  \begin{equation*}
    \mathrm{hom}_{\bG}(V,W) = \mathrm{hom}_{\bG}(k,W\otimes V^*), 
  \end{equation*}
  this simply means that the restriction functor
  \begin{equation}\label{eq:res}
    \mathrm{Rep}(\bG)\ni V\mapsto V|_{\bH}\in \mathrm{Rep}(\bH)
  \end{equation}
  is full (for both left and right comodules, but here we focus on the latter).  
  
  \vspace{.5cm}

  {\bf Step 4: Wrapping up.} Because the restriction functor \Cref{eq:res} is full, simple, non-isomorphic $\bG$-representations that remain simple over $\bH$ also remain non-isomorphic. 

  Now, assuming $\bH\le \bG$ is not an isomorphism (or there would be nothing to prove), some irreducible $V\in\widehat{\bG}$ must become reducible over $\bH$. There are two possibilities to consider:
  \begin{enumerate}[(a)]
  \item\label{item:1} All simple subquotients of the reducible representation $V|_{\bH}$ are isomorphic. We then have (in $\mathrm{Rep}(\bH)$) a surjection of $V$ onto a simple quotient thereof, which then embeds into $V$ again. All in all this gives a non-scalar endomorphism of $V$ over $\bH$, contradicting the fullness of the restriction functor \Cref{eq:res}. 
  \item\label{item:2} $V$ acquires at least two non-isomorphic simple subquotients $V_i$, $i=1,2$ over $\bH$. Then, the image of the coefficient coalgebra $C_V=V^*\otimes V$ of \Cref{eq:pwdec} through $\pi:\cO(\bG)\to \cO(\bH)$ will contain both
    \begin{equation*}
      C_{V_i} = V_i^*\otimes V_i\le \cO(\bH),\ i=1,2
    \end{equation*}
    as (simple) subcoalgebras. 
    
    The requirement that $S^2(C_{V_i})=C_{V_i}$ means that the simple comodules $V_i$ are isomorphic to their respective double duals $V_i^{**}$ (as $\cO(\bH)$-comodules, not just vector spaces). But then
    \begin{equation*}
      C_{V_i} = V_i^*\otimes V_i\cong V_i^*\otimes V_i^{**}
    \end{equation*}
    contains an $\bH$-invariant vector, namely the image of the {\it coevaluation} \cite[Definition 9.3.1]{maj-fnd}
    \begin{equation*}
      \mathrm{coev}_{V_i^*}:k\to V_i^*\otimes V_i^{**}. 
    \end{equation*}
    It follows that the space of $\bH$-invariants of the $\cO(\bG)$-comodule $\pi(C_V)$ is at least 2-dimensional, whereas that of $\bG$-invariants is at most 1-dimensional (because the same holds true of $C_V=V^*\otimes V$). This contradicts the fullness of \Cref{eq:res} and hence our assumption that $\bH\le \bG$ is {\it not} an isomorphism.
  \end{enumerate}
  The proof of the theorem is now complete. 
\end{th:isred}

\begin{remark}
  Left and right normality are proven equivalent to an alternative notion (\cite[Definition 2.3]{wnorm}) in \cite[Theorem 2.7]{wnorm} in the context of {\it CQG algebras}, i.e. complex cosemisimple Hopf $*$-algebras with positive unital integral (this characterization is equivalent to \cite[Definition 2.2]{dk}).

  The substance of \Cref{th:isred}, however, is the cosemisimplicity claim; this is of no concern in the CQG-algebra case, as a Hopf $*$-algebra that is a quotient of a CQG algebra is automatically again CQG (as follows, for instance, from \cite[Proposition 2.4]{dk}), and hence cosemisimple.
\end{remark}

\begin{example}\label{ex:borel}
  The weaker requirement that $\cO(\bG/\bH)=\cO(\bH\backslash \bG)$ for normality would render \Cref{th:isred} false.

  Let $\bG$ be a semisimple complex algebraic group and $\bB\le \bG$ a {\it Borel subgroup} \cite[Part II, \S 1.8]{jntz}. The restriction functor
  \begin{equation*}
    \mathrm{Res}:\mathrm{Rep}(\bG)\to \mathrm{Rep}(\bB) 
  \end{equation*}
  is full \cite[Part II, Corollary 4.7]{jntz}, so in particular
  \begin{equation*}
    \cO(\bG/\bB) = \mathrm{hom}_\bB(\mathrm{triv},\cO(\bG)) = \mathrm{hom}_\bG(\mathrm{triv},\cO(\bG)) = \bC
  \end{equation*}
  and similarly for $\cO(\bB\backslash \bG)$. This means that $\cO(\bG/\bB)=\cO(\bB\backslash \bG)$, but $\bB$ is nevertheless not reductive.
\end{example}

\begin{remark}\label{re:clsnorm}
  The classical (as opposed to quantum) analogue of \Cref{th:isred} admits an alternative, more direct proof relying on the structure of reductive groups:
  \begin{itemize}
  \item In characteristic zero linear reductivity is equivalent (by \cite[p.88 (2)]{nag}, for instance) to plain reductivity \cite[\S 11.21]{brl}, i.e. the condition that the {\it unipotent radical} $\cR_u(\bG)$ of $\bG$ (the largest normal connected unipotent subgroup) be trivial.

    Assuming $\bG$ is reductive, for any normal $\bK\trianglelefteq \bG$ the corresponding unipotent radical $\cR_u(\bK)$ is characteristic in $N$ and hence normal in $\bG$, meaning that
    \begin{equation*}
      \cR_u(\bK)\le \cR_u(\bG)=\{1\}
    \end{equation*}
    and hence $N$ is again reductive (so linearly reductive, in characteristic zero). 
    
  \item On the other hand, in positive characteristic $p$ \cite[p.88 (1)]{nag} says that the linearly reductive groups $\bG$ are precisely those fitting into an exact sequence
    \begin{equation*}
      \{1\}\to \bK\to \bG\to \bG/\bK\to \{1\}
    \end{equation*}
    with $\bK$ a closed subgroup of a torus and $\bG/\bK$ finite of order coprime to $p$. Clearly then, normal subgroups of $\bG$ have the same structure.
  \end{itemize}
\end{remark}

\section{Clifford theory}\label{se:cliff}

We work with an exact sequence \Cref{eq:seq}
\begin{equation}\label{eq:seq}
  k\to A\to H\to B\to k
\end{equation}
of cosemisimple Hopf algebras in the sense of \cite[p. 23]{ad}. Note that we additionally know that $H$ is left and right coflat over $B$ (simply because the latter is cosemisimple) and left and right faithfully flat over $A$ (by \cite[Theorem 2.1]{chi-ff}).

We will make frequent use of \cite[Theorem 1]{tak-ff}, to the effect that
  \begin{equation}\label{eq:equiv-ahb}
  \begin{tikzpicture}[auto,baseline=(current  bounding  box.center)]
    \path[anchor=base] (0,0) node (1) {$\cM_A^H$} +(5,0) node (2) {$\cM^B$};
    \draw[->] (1) to[bend left=10] node [pos=.5,auto] {$\scriptstyle M\mapsto M/MA^-$} (2);
    \draw[->] (2) to[bend left=10] node [pos=.5,auto] {$\scriptstyle  N\otimes A\mapsfrom N$} (1);    
  \end{tikzpicture}
\end{equation}
is an equivalence, where the $-$ superscript denotes kernels of
counits.

Upon identifying $\cM^B$ with $\cM^H_A$ via \Cref{eq:equiv-ahb}, the adjunction
\begin{equation}\label{eq:adj}
  \begin{tikzpicture}[auto,baseline=(current  bounding  box.center)]
    \path[anchor=base] (0,0) node (1) {$\cM^H$} +(5,0) node (2) {$\cM^B$};
    \draw[->] (1) to[bend left=10] node [pos=.5,auto] {$\scriptstyle \mathrm{corestrict}$} (2);
    \draw[->] (2) to[bend left=10] node [pos=.5,auto] {$\scriptstyle  -\square_B H$} (1);    
  \end{tikzpicture}
\end{equation}

becomes 

\begin{equation}\label{eq:adj-alt}
  \begin{tikzpicture}[auto,baseline=(current  bounding  box.center)]
    \path[anchor=base] (0,0) node (1) {$\cM^H$} +(5,0) node (2) {$\cM^H_A$.};
    \draw[->] (1) to[bend left=10] node [pos=.5,auto] {$\scriptstyle -\otimes A$} (2);
    \draw[->] (2) to[bend left=10] node [pos=.5,auto] {$\scriptstyle \mathrm{forget}$} (1);    
  \end{tikzpicture}
\end{equation}

We will freely switch points of view between the two perspectives provided by \Cref{eq:adj,eq:adj-alt}. Consider the following binary relation $\sim_B$ on $\widehat{B}$.

\begin{definition}\label{def.simb}
  For $V,W\in \widehat{B}$, $V\sim_B W$ provided there is a simple $H$-comodule $U$ such that $V$ and $W$ are both constituents of the corestriction of $U$ to $B$.
\end{definition}

Similarly, we will study the following relation on $\widehat{H}$:

\begin{definition}\label{def.simh}
  For $V,W\in \widehat{H}$ we set $V\sim_H W$ provided $\hom^B(V,W)\ne 0$.
\end{definition}

\begin{remark}
  In other words, $\sim_H$ signifies the fact that the corestrictions of $V$ and $W$ to $\cM^B$ have common simple constituents.
\end{remark}

Our first observation is that $\sim_H$ is an equivalence relation, and provides an alternate characterization for it.

\begin{theorem}\label{th.simh} $\sim_H$ is an equivalence relation on $\widehat{H}$, and moreover, for $V,W\in\widehat{H}$ the following conditions are equivalent
  \begin{enumerate} \renewcommand{\labelenumi}{(\arabic{enumi})}
    \item $V\sim_H W$;
    \item as $B$-comodules, $V$ and $W$ have the same simple constituents;
    \item $V$ embeds into $W\otimes A\in \cM^H$.
  \end{enumerate}
\end{theorem}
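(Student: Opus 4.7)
The plan is to prove (1) $\Leftrightarrow$ (3) by a direct adjunction computation and then deduce (2) from (3) via symmetry, combined with the observation that $A$, corestricted to $B$, is a trivial $B$-comodule. The equivalence-relation claim is then a formal consequence of characterization~(2).

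For (1) $\Leftrightarrow$ (3), first transport $\hom^B(V,W)$ through the equivalence \Cref{eq:equiv-ahb}: the $B$-comodules $V|_B$ and $W|_B$ correspond to $V\otimes A$ and $W\otimes A$ in $\cM^H_A$, so
\[
\hom^B(V,W)\;\cong\;\hom^H_A(V\otimes A,\,W\otimes A).
\]
The adjunction \Cref{eq:adj-alt} identifies the right-hand side with $\hom^H(V,\,W\otimes A)$, where $W\otimes A$ bears the tensor $H$-coaction. Since $V$ is simple, non-vanishing of this hom-space is equivalent to the existence of an $H$-embedding $V\hookrightarrow W\otimes A$, which is condition~(3).

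For (3) $\Rightarrow$ (2), the key observation is that the Hopf subalgebra $A\le H$ becomes a trivial $B$-comodule after corestriction along $\pi$: the defining condition $(\id\otimes\pi)\Delta(a)=a\otimes 1$ for $a\in A$ says exactly that the induced $B$-coaction is $a\mapsto a\otimes 1$. Consequently $(W\otimes A)|_B$ is a direct sum of copies of $W|_B$, and any $H$-embedding $V\hookrightarrow W\otimes A$ corestricts to an embedding of the finite-dimensional $V|_B$ into finitely many copies of $W|_B$; thus every simple $B$-constituent of $V|_B$ appears in $W|_B$. Since $\sim_H$ is symmetric (cosemisimplicity of $B$ makes the condition ``$V|_B$ and $W|_B$ share a simple constituent'' symmetric), running the same argument with $V$ and $W$ interchanged gives the reverse inclusion, establishing (2). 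The implication (2) $\Rightarrow$ (1) is immediate: a shared simple $B$-constituent $X$ provides a nonzero composition $V|_B\twoheadrightarrow X\hookrightarrow W|_B$. With (2) in hand, $\sim_H$ is an equivalence relation because it partitions $\widehat{H}$ by the subset of $\widehat{B}$ consisting of simple $B$-constituents. The argument is essentially formal once \Cref{eq:equiv-ahb,eq:adj-alt} are in place, and I anticipate no genuine obstacle beyond the bookkeeping of transporting hom-spaces across the two equivalent perspectives.
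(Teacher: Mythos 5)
Your argument is correct and follows essentially the same route as the paper: transport $\hom^B(V,W)$ through the Takeuchi equivalence \Cref{eq:equiv-ahb} and the adjunction \Cref{eq:adj-alt} to get $\hom^H(V,W\otimes A)$, use simplicity of $V$ for (1)$\Leftrightarrow$(3), then use triviality of $A$ as a $B$-comodule together with symmetry to upgrade to (2). The only cosmetic difference is that you justify the triviality of $A|_B$ directly from the defining condition of $\cO(\bG/\bH)$ rather than citing exactness of \Cref{eq:seq}, which amounts to the same thing.
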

\begin{proof} Note first that (2) clearly defines an equivalence relation on $\widehat{H}$, so the first statement of the theorem will be a consequence of
  \begin{equation*} (1) \Leftrightarrow (2) \Leftrightarrow (3).
  \end{equation*} We prove the latter result in stages.

  \vspace{.5cm}

  {\bf $(1)\Leftrightarrow (3)$.}  By definition, $V\sim_H W$ if and only if
  \begin{equation*} \hom^B(V,W)\ne 0.
  \end{equation*} Via \Cref{eq:equiv-ahb} and the hom-tensor adjunction \Cref{eq:adj-alt}, this hom space can be identified with
  \begin{equation}\label{eq:vwa} \hom^H_A(V\otimes A,W\otimes A) \cong \hom^H(V,W\otimes A).
  \end{equation} The simplicity of $V\in \widehat{H}$ now implies that every non-zero element of the right hand side of \Cref{eq:vwa} is an embedding, hence finishing the proof of the equivalence of (1) and (3).

  \vspace{.5cm}

  {\bf $(1)\Leftrightarrow (2)$.} Let us denote by $\mathrm{const}(\bullet)$ the set of simple constituents of a $B$-comodule $\bullet$.

  By definition $V\sim_HW$ means that {\it some} of the simple constituents of $V$ and $W$ as objects in $\cM^B$ coincide, so (2) is clearly stronger than (1). Conversely, note that by the equivalence (1) $\Rightarrow$ (3) proven above, whenever $V\sim_HW$ we have
  \begin{equation}\label{eq:const} \mathrm{const}(V)\subseteq \mathrm{const}(W\otimes A),
  \end{equation} where the respective objects are regarded as $B$-comodules via the corestriction functor $\cM^H\to \cM^B$.

  In turn however, given that $A\in \cM^H$ breaks up as a sum of copies of $k$ in $\cM^B$ (because of the exactness of \Cref{eq:seq}), the right hand side of \Cref{eq:const} is simply $\mathrm{const}(W)$. All in all, we have
  \begin{equation*} V\sim_HW\Rightarrow \mathrm{const}(V)\subseteq \mathrm{const}(W).
  \end{equation*} This together with the symmetry of $\sim_H$ (obvious by definition from the semisimplicity of $\cM^B$) finishes the proof of (1) $\Rightarrow$ (2) and of the theorem.
\end{proof}

\begin{theorem}\label{th.simb}
  $\sim_B$ is an equivalence relation on $\widehat{B}$ with finite classes.
\end{theorem}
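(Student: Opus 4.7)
The plan is to use \Cref{th.simh} to describe each $\sim_B$-class as the set of simple $B$-constituents of the corestriction of a single simple $H$-comodule; with that description in hand, the equivalence-relation axioms and the finiteness of classes all drop out essentially for free.

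\emph{Reflexivity} amounts to showing that every $V\in\widehat{B}$ arises as a simple $B$-constituent of some $U|_B$ with $U\in\widehat{H}$. I would transport $V$ across the equivalence \Cref{eq:equiv-ahb} to the nonzero $H$-comodule $V\otimes A\cong V\square_B H$, pick any simple $H$-subcomodule $U$ of it, and invoke the adjunction \Cref{eq:adj}: $\hom^B(U|_B,V)\cong \hom^H(U,V\square_B H)\ne 0$, so $V$ is a constituent of $U|_B$. Symmetry is immediate from \Cref{def.simb}.

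The main content is \emph{transitivity}, which I expect to follow at once from the equivalence $(1)\Leftrightarrow (2)$ in \Cref{th.simh}. Indeed, if simple $U_1,U_2\in\widehat{H}$ share a common simple $B$-constituent then by semisimplicity of $\cM^B$ there is a nonzero $B$-comodule map $U_1\to U_2$, so $U_1\sim_H U_2$; by \Cref{th.simh} the comodules $U_1|_B$ and $U_2|_B$ then have the \emph{same} simple $B$-constituents. Given $V\sim_B W$ via $U_1$ and $W\sim_B V'$ via $U_2$, the common witness $W$ forces $U_1\sim_H U_2$, so $U_1|_B$ also contains $V'$, yielding $V\sim_B V'$ via $U_1$. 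The same reasoning shows that, for any witness $U$ of $V$'s reflexivity, the $\sim_B$-class of $V$ coincides with the set of simple $B$-constituents of $U|_B$, independent of the choice of $U$.

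\emph{Finiteness} of classes then follows immediately: each $U\in\widehat{H}$ is finite-dimensional \cite[Theorem 5.1.1]{mont}, so $U|_B$ admits only finitely many simple $B$-constituents. I do not anticipate a real obstacle here, since the substantive work is already packaged into \Cref{th.simh}; the present statement is, in effect, extracted by pairing that theorem with the essential surjectivity of the equivalence \Cref{eq:equiv-ahb}.
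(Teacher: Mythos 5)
Your proof is correct and follows essentially the same route as the paper: both identify the $\sim_B$-classes with the sets $\mathrm{const}(U)$ for $U\in\widehat{H}$ and derive the partition property (hence transitivity) and finiteness from \Cref{th.simh} together with finite-dimensionality of simple $H$-comodules. The only difference is that you explicitly verify the coverage/reflexivity step via \Cref{eq:equiv-ahb} and the adjunction \Cref{eq:adj}, which the paper leaves implicit; that verification is accurate.
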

\begin{proof}
  We know from \Cref{th.simh} above that as $U$ ranges over $\widehat{H}$, the sets $\mathrm{const}(U)$ of constituents of $U\in \cM^B$ partition $\widehat{B}$, thus defining an equivalence relation on the latter set.

  The definition of $\sim_B$ ensures that $V\sim_B W$ if and only if $V$ and $W$ fall in the same set $\mathrm{const}(U)$, and hence $\sim_B$ coincides with the equivalence relation from the previous paragraph.

  Finally, the statement on finiteness of classes is implicit in their description given above: an equivalence class is the set of simple constituents of a simple $H$-comodule $U$ viewed as a $B$-comodule, and it must be finite because $\mathrm{dim}(U)$ is.
\end{proof}

\Cref{th.simh,th.simb} establish a connection between the equivalence relations $\sim_H$ and $\sim_B$ on $\widehat{H}$ and $\widehat{B}$ respectively. We record it below.

Before getting to the statement, recall the notation $\mathrm{const}(\bullet)\subseteq \widehat{B}$ for the set of simple summands of an object $\bullet\in \cM^B$. With that in mind, we have the following immediate consequence of \Cref{th.simh,th.simb}.

\begin{proposition}\label{pr.2rels}
  The range of the map
  \begin{equation*}
    \widehat{H}\to \text{ finite subsets of }\widehat{B}
  \end{equation*}
  sending $V\in\widehat{H}$ to $\mathrm{const}(V)$ consists of the equivalence classes of $\sim_B$, and its fibers are the classes of $\sim_H$. \qedhere
\end{proposition}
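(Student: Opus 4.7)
Both assertions are essentially bookkeeping consequences of \Cref{th.simh,th.simb}, so my plan is to unpack each in turn and quote the relevant characterizations.

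\textbf{Fibers.} For the statement about fibers, I would observe that by the equivalence (1)$\Leftrightarrow$(2) in \Cref{th.simh}, two simple $H$-comodules $V,W \in \widehat{H}$ satisfy $V \sim_H W$ if and only if $\mathrm{const}(V)=\mathrm{const}(W)$ as subsets of $\widehat{B}$. In other words, the level sets of the map $V\mapsto \mathrm{const}(V)$ are by definition the $\sim_H$-equivalence classes. This takes care of the fiber claim directly, with no further computation needed.

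\textbf{Range.} For the image claim, I would recall from the proof of \Cref{th.simb} that the $\sim_B$-equivalence classes are (tautologically, from \Cref{def.simb}) precisely the sets of the form $\mathrm{const}(U)$ as $U$ ranges over $\widehat{H}$: indeed, $V \sim_B W$ was defined to mean that $V,W$ lie in a common $\mathrm{const}(U)$, and \Cref{th.simb} asserts this is already an equivalence relation, so the classes are exactly the (nonempty) sets $\mathrm{const}(U)$. Hence the image of the map $\widehat{H}\to 2^{\widehat{B}}$ coincides with the collection of $\sim_B$-classes.

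\textbf{Difficulty.} There is essentially no obstacle here: the content of the proposition has been absorbed into \Cref{th.simh,th.simb}, and what remains is to match definitions. The only thing to be slightly careful about is to note that every $\sim_B$-class is indeed hit (not merely contained in some $\mathrm{const}(U)$), which is automatic because $\sim_B$-classes were defined as the sets $\mathrm{const}(U)$ themselves; I would state this explicitly in one sentence to close the argument.
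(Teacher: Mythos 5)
Your proposal is correct and follows exactly the route the paper intends: the paper presents this proposition as an immediate consequence of \Cref{th.simh,th.simb} (hence the absent proof), with the fiber claim coming from the equivalence (1)$\Leftrightarrow$(2) of \Cref{th.simh} and the range claim from the identification, in the proof of \Cref{th.simb}, of the $\sim_B$-classes with the sets $\mathrm{const}(U)$, $U\in\widehat{H}$. Nothing further is needed.
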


\section{Relative chain groups and centers}\label{se:rel}

\begin{definition}\label{def:cg}
  Let $\bH\le \bG$ be an inclusion of linearly reductive quantum groups. The {\it (relative) chain group} $C(\bG,\bH)$ is defined by
  \begin{itemize}
  \item generators $g_V$ for simple comodules $V\in \widehat{\bG}$;
  \item relations
    \begin{equation}\label{eq:crel}
      \mathrm{hom}_{\bH}(U,V\otimes W)\ne 0\Rightarrow g_U = g_V g_W;
    \end{equation}
    that is, one such relation whenever the restrictions of $U$ and $V\otimes W$ to $\bH$ have non-trivial common summands (i.e. $U$ and $V\otimes W$ are not {\it disjoint} over $\bH$).
  \end{itemize}
  We write $C(\bG):=C(\bG,\bG)$.
\end{definition}

\begin{remark}\label{re:chains}
  For chained inclusions $\bK\le \bH\le \bG$ we have a map $C(\bG,\bK)\to C(\bG,\bH)$ sending the class of $V\in\widehat{\bG}$ in the domain to the class of the selfsame $V$ in the codomain. This is easily seen to be well-defined and a group morphism.
\end{remark}

Recall \cite[Definition 2.10]{chi-coc}. 

\begin{definition}\label{def:cent}
  Let $\bG$ be a linearly reductive quantum group. Its {\it center} $Z(\bG)\le \bG$ is the quantum subgroup dual to the largest Hopf algebra quotient
  \begin{equation*}
    \pi:\cO(\bG)\to \cO(Z(\bG))
  \end{equation*}
  that is central in the sense of \cite[Definition 2.1]{chi-coc}:
  \begin{equation*}
    \pi(x_1)\otimes x_2 = \pi(x_2)\otimes x_1\in \cO(Z(\bG))\otimes \cO(\bG),\ \forall x\in \cO(\bG). 
  \end{equation*}
\end{definition}

The relative version of this construction, alluded to in the title, is as follows.

\begin{definition}\label{def:relcent}
  Let $\bH\le \bG$ be an embedding of linearly reductive quantum groups. The corresponding {\it relative center} $Z(\bG,\bH)$ is the {\it intersection} $Z(\bG)\cap \bH$ denoted by $Z(\bG)\wedge \bH$ in \cite[Definition 1.15]{chk}.

  This is a quantum subgroup of both $\bH$ and $Z(\bG)$ (and hence also of $\bG$), and is automatically linearly reductive by \cite[Proposition 3.1]{chk}.
\end{definition}

Each irreducible $\bG$-representation breaks up as a sum of mutually isomorphic (one-dimensional) representations over the center $Z(\bG)$, and hence gets assigned an element of $\widehat{Z(\bG)}$: its {\it central character}. Two such irreducible representations that are not disjoint over $\bH$ must have corresponding central characters agreeing on
\begin{equation*}
  Z(\bG,\bH):=Z(\bG)\cap \bH
\end{equation*}
(the {\it relative center} associated to the inclusion $\bH\le \bG$), so we have a canonical morphism
\begin{equation}\label{eq:can}
  \cat{can}:C(\bG,\bH)\to \widehat{Z(\bG,\bH)}
\end{equation}

\begin{theorem}\label{th:caniso}
  For any embedding $\bH\le \bG$ of linearly reductive quantum groups \Cref{eq:can} is an isomorphism.
\end{theorem}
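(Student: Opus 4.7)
My plan is to build a two-sided inverse for $\cat{can}$. The key construction is the linear map
\[
\phi:\cO(\bG)\to k\,C(\bG,\bH),\qquad \phi(x):=\varepsilon(x)\,g_V \text{ for } x\in C_V,\ V\in\widehat{\bG},
\]
extended across the Peter-Weyl decomposition \Cref{eq:pwdec}. Verifying $\phi$ is a Hopf algebra morphism is largely routine: coalgebra compatibility uses that each $C_V$ is a subcoalgebra and $g_V$ is grouplike; multiplicativity on $C_V\cdot C_W$ reduces to $g_U=g_Vg_W$ for $\bG$-irreducible constituents $U\subseteq V\otimes W$, a special case of the defining relations of $C(\bG,\bH)$ since any $\bG$-subcomodule is an $\bH$-subcomodule; and the antipode identity $g_{V^*}=g_V^{-1}$ falls out of $k\subseteq V\otimes V^*$. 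Next, $\phi$ is manifestly central in the sense of \cite[Definition~2.1]{chi-coc}: for $x\in C_V$, both $\phi(x_1)\otimes x_2$ and $\phi(x_2)\otimes x_1$ collapse to $g_V\otimes x$ by the counit axiom. By \Cref{def:cent}, $\phi$ therefore factors through $\pi_Z:\cO(\bG)\to \cO(Z(\bG))$.

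The crucial step is that $\phi$ additionally factors through $\pi_\bH:\cO(\bG)\to \cO(\bH)$. I would exhibit the required $\psi:\cO(\bH)\to k\,C(\bG,\bH)$ by setting, on each simple subcoalgebra $C_W\le \cO(\bH)$,
\[
\psi|_{C_W}(y):=\varepsilon(y)\,g_V,
\]
where $V\in\widehat{\bG}$ is any irreducible containing $W$ as an $\bH$-summand; existence of such $V$ follows because $\pi_\bH$ is a surjection of cosemisimple Hopf algebras. The heart of the proof is showing this definition does not depend on the choice of $V$: if two irreducibles $V_1,V_2\in\widehat{\bG}$ both restrict to contain $W$ as an $\bH$-summand, then $V_1\otimes V_2^*$ contains $W\otimes W^*\supseteq k$ as an $\bH$-summand, so the defining relation of $C(\bG,\bH)$ with $U=k$ yields $e=g_{V_1}g_{V_2^*}=g_{V_1}g_{V_2}^{-1}$, i.e.\ $g_{V_1}=g_{V_2}$. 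The analogous argument gives multiplicativity of $\psi$, counit and antipode compatibilities being straightforward. The identity $\psi\circ\pi_\bH=\phi$ on each $C_V$ then drops out of $\varepsilon_{\cO(\bG)}=\varepsilon_{\cO(\bH)}\circ\pi_\bH$.

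Having factored $\phi$ through both $\pi_\bH$ and $\pi_Z$, the map kills $\ker\pi_\bH+\ker\pi_Z$ and hence descends to $\cO(\bG)/(\ker\pi_\bH+\ker\pi_Z)=\cO(Z(\bG)\cap\bH)=\cO(Z(\bG,\bH))$ by the intersection formalism of \cite[\S1]{chk}. Centrality forces $\cO(Z(\bG))$ (and hence its quotient $\cO(Z(\bG,\bH))$) to be cocommutative; combined with cosemisimplicity, this identifies $\cO(Z(\bG,\bH))\cong k\,\widehat{Z(\bG,\bH)}$ as a group algebra over the algebraically closed $k$. The induced group-algebra map $k\widehat{Z(\bG,\bH)}\to kC(\bG,\bH)$ corresponds to a group homomorphism $\widehat{Z(\bG,\bH)}\to C(\bG,\bH)$ sending each central character $\chi_V|_{Z(\bG,\bH)}$ to $g_V$, which inverts $\cat{can}$ on generators. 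Surjectivity of $\cat{can}$, needed to see this inverse is globally defined, is a direct cosemisimplicity consequence: every simple subcoalgebra $k\chi\subseteq \cO(Z(\bG,\bH))$ is the image of some $\pi_{Z(\bG,\bH)}(C_V)$, forcing $\chi=\chi_V|_{Z(\bG,\bH)}$. The main obstacle throughout is the well-definedness of $\psi$: it really is the essential content of the theorem, capturing how the abstract chain-group relations on $\widehat{\bG}$ are precisely rich enough to reconstruct the character group of the concrete quantum subgroup $Z(\bG)\cap\bH$.
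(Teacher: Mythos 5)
Your argument is correct, but it proceeds quite differently from the paper's. The paper splits the statement into surjectivity and injectivity: surjectivity is inherited from the already-known absolute case $C(\bG)\cong \widehat{Z(\bG)}$ of \cite[Proposition 2.9]{chi-coc} composed with the restriction surjection $\widehat{Z(\bG)}\twoheadrightarrow \widehat{Z(\bG,\bH)}$, while injectivity is a representation-theoretic argument: given $V\in\widehat{\bG}$ with trivial $Z(\bG,\bH)$-character, the isomorphism theorem $\bH/Z(\bG,\bH)\cong Z(\bG)\bH/Z(\bG)$ of \cite[Theorem 3.4]{chk} lets one extend $V|_{\bH}$ to a $Z(\bG)\bH$-representation with trivial $Z(\bG)$-action, induce up to $\bG$, and extract a simple summand $V_1$ with trivial central character and $\mathrm{hom}_{\bH}(V_1,V)\ne 0$, forcing $g_V=g_{V_1}=e$ via \Cref{re:chains} and the absolute isomorphism. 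You instead build the inverse outright as a Hopf-algebra-level map $\phi:\cO(\bG)\to kC(\bG,\bH)$ and show it factors through $\cO(Z(\bG,\bH))$; the two factorizations (through $\cO(Z(\bG))$ by centrality, through $\cO(\bH)$ by the well-definedness of $\psi$, which as you say is the real content and is a correct use of the relations \Cref{eq:crel} with $U=k$) are both sound, and I checked that $\psi\circ\pi_{\bH}=\phi$ and the passage from the descended map to a group homomorphism $\widehat{Z(\bG,\bH)}\to C(\bG,\bH)$ go through. Your route is more self-contained --- it never invokes the join $Z(\bG)\vee\bH$, the isomorphism theorem, or the induction functor, and it reproves rather than cites the absolute case \cite[Proposition 2.9]{chi-coc} --- at the cost of more bookkeeping; the paper's route is shorter given the cited machinery and makes the Clifford-theoretic mechanism visible. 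Two points you lean on and should pin down explicitly: that $\cO(Z(\bG)\wedge\bH)$ in the sense of \cite[Definition 1.15]{chk} really is $\cO(\bG)/(\ker\pi_{\bH}+\ker\pi_{Z(\bG)})$ (true, since the sum of two Hopf ideals is a Hopf ideal, but it is a definitional identification worth verifying against the source), and the cosemisimplicity of $\cO(Z(\bG,\bH))$, which is not automatic for Hopf quotients and must be taken from \cite[Proposition 3.1]{chk} as in \Cref{def:relcent}.
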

\begin{proof}
  Consider the commutative diagram
  \begin{equation}\label{eq:cans}
    \begin{tikzpicture}[auto,baseline=(current  bounding  box.center)]
      \path[anchor=base] 
      (0,0) node (l) {$C(\bG)$}
      +(3,.5) node (u) {$C(\bG,\bH)$}
      +(3,-.5) node (d) {$\widehat{Z(\bG)}$}
      +(6,0) node (r) {$\widehat{Z(\bG,\bH)}$}
      ;
      \draw[->] (l) to[bend left=6] node[pos=.5,auto] {$\scriptstyle $} (u);
      \draw[->] (u) to[bend left=6] node[pos=.5,auto] {$\scriptstyle \cat{can}$} (r);
      \draw[->] (l) to[bend right=6] node[pos=.5,auto,swap] {$\scriptstyle \cat{can}$} node[pos=.5,auto] {$\scriptstyle \cong$} (d);
      \draw[->] (d) to[bend right=6] node[pos=.5,auto,swap] {$\scriptstyle $} (r);
    \end{tikzpicture}
  \end{equation}
  where
  \begin{itemize}
  \item the upper left-hand morphism is an instance of the maps noted in \Cref{re:chains};
  \item the bottom right-hand map is the (plain) group surjection dual to the quantum-group inclusion $Z(\bG)\cap \bH\le Z(\bG)$;
  \item and the fact that the bottom left-hand map is an isomorphism is a paraphrase of \cite[Proposition 2.9]{chi-coc} in conjunction with \cite[Definition 2.10]{chi-coc}. 
  \end{itemize}
  The surjectivity of the bottom composition entails that of \Cref{eq:can}, so it remains to show that the latter is one-to-one.

  Let $V\in\widehat{\bG}$ be a simple comodule where $Z(\bG,\bH)$ operates with trivial character, i.e. one whose class in $C(\bG,\bH)$ is annihilated by \Cref{eq:can}. We can then form the quantum subgroup
  \begin{equation*}
    Z(\bG)\bH:=Z(\bG)\vee \bH\le \bG
  \end{equation*}
  generated by $Z(\bG)$ and $\bH$ as in \cite[Definition 1.15]{chk} (the `$\vee$' notation is used there; we suppress the symbol here for brevity), which then satisfies, according to \cite[Theorem 3.4]{chk}, a quantum-flavored isomorphism theorem:
  \begin{equation*}
    \bH/Z(\bG,\bH) \stackrel{\cong}{\longrightarrow} Z(\bG)\bH/Z(\bG)
  \end{equation*}
  via the canonical map induced from $\bH\to Z(\bG)\bH$. Since $V$ (or rather its restriction $V|_{\bH}$) is a representation of the former group because $Z(\bG,\bH)$ operates trivially, it lifts to a $Z(\bG)\bH$-representation with $Z(\bG)$ acting trivially. In summary:
  \begin{equation*}
    \text{The restriction }V|_{\bH}\text{ extends to a }Z(\bG)\bH\text{-representation $W$ with trivial }Z(\bG)\text{-action}.
  \end{equation*}
  But then the induced representation $\mathrm{Ind}_{Z(\bG)\bH}^\bG W$ again has trivial central character, and hence so do all of its simple summands $V_1$. The adjunction \Cref{eq:cotens} yields
  \begin{equation*}
    \mathrm{hom}_{Z(\bG)\bH}(V_1|_{Z(\bG)\bH},W)\cong \mathrm{hom}_\bG\left(V_1,\mathrm{Ind}_{Z(\bG)\bH}^\bG W\right)\ne \{0\},
  \end{equation*}
  meaning that $V_1$ fails to be disjoint from $W$ over $Z(\bG)\bH$ and hence also from
  \begin{equation*}
    V|_{\bH} = W|_{\bH}\text{ over }\bH.
  \end{equation*}
  To conclude, observe that
  \begin{itemize}
  \item \Cref{eq:can} agrees on $V$ and $V_1$ due to the noted non-disjointness
    \begin{equation*}
      \mathrm{hom}_{\bH}(V_1,V)\ne 0;
    \end{equation*}
  \item while the bottom left-hand map $\cat{can}:C(\bG)\to \widehat{Z(\bG)}$ of \Cref{eq:cans} annihilates $V_1$ because the latter has trivial central character;
  \item and hence the top right-hand $\cat{can}$ map in \Cref{eq:cans} must also annihilate $V$. 
  \end{itemize}
  This being the desired conclusion, we are done.
\end{proof}



\addcontentsline{toc}{section}{References}

\Addresses

\end{document}